\newtheorem{teo}{Teorema}[section]
\newtheorem{defi}[teo]{Definição}
\newtheorem{propo}[teo]{Proposi\c c\~ao}
\theoremstyle{definition}
\theoremstyle{remark}
\renewcommand{\maketitle}{\vspace*{0pt}
	\begin{center}
		 \textbf{\@title}
	\end{center}
	\bgroup\setlength{\parindent}{0pt}
	\begin{flushright}
		\@author
	\end{flushright}\egroup
}
\renewenvironment{abstract}
{{\bfseries\noindent{\normalsize\abstractname}\par\nobreak\smallskip}}
\newenvironment{abstracteng}{
	\begin{flushleft}%
		\bfseries{Abstract}
	\end{flushleft}}%
\numberwithin{equation}{section}
\begin{document}

\setcounter{page}{1}

\title{ {\bf \uppercase{Algumas luminescências sobre o jogo \textit{Lights Out}
}}}

\author{Adriano Verdério, Izabele D'Agostin, Mari Sano, Patrícia Massae Kitani\\{\small Universidade Tecnológica Federal do Paraná}}

\date{}



\maketitle


\begin{abstract}
\noindent A teoria por trás do jogo {\it Lights Out} tem sido desenvolvida por vários autores. O objetivo deste artigo é apresentar alguns resultados relacionados a esse jogo utilizando álgebra linear. Estabelecemos um critério para a solubilidade desse jogo no caso de uma malha $m$ por $n$,
que depende da inversibilidade de uma matriz, e apresentamos as condições para que isso ocorra, de fácil verificação a partir de $m$ e $n$. 
Além disso, determinamos explicitamente o valor do determinante para um caso particular.
\end{abstract}
\vspace{0.2cm}
\noindent 
\textbf{Palavras-chave:}  Matriz tridiagonal em blocos, produto de Kronecker, determinante, jogo  {{\it Lights Out}}.

\begin{abstracteng}
\vspace{-0.2cm}
\noindent The theory behind the Lights Out game has been developed by several authors. The aim of this work is to present some results related to this game using Linear Algebra. We establish a criterion for the solubility of this game in the case of an 
$m$ by $n$ grid,
which depends on the invertibility of a matrix, and we present the conditions for this to occur, easily verifiable from $m$ and $n$.
Furthermore, we explicitly determine the value of the determinant for a particular case.
\end{abstracteng}
\vspace{0.2cm}
\noindent 
\textbf{Keywords:} Block tridiagonal matrix, Kronecker product, determinant, Lights Out game.

\section{Introdu\c{c}\~{a}o}

Em 1995, a {{\it Tiger Electronics}} desenvolveu o jogo eletrônico {{\it Lights Out}}. Ele é basicamente composto por 25 botões posicionados em 5 linhas e 5 colunas como mostra a Figura \ref{fig.1}, alguns iluminados e outros não. O objetivo do jogo é acionar a menor \linebreak quantidade de botões para que todas as luzes sejam desligadas. A {{\it Tiger}} lançou outras versões, como por exemplo o {{\it Mini Lights Out}} que possui o formato de uma malha
$4$ por $4$
e o {{\it Lights Out Deluxe}} com 36 botões, no formato de uma malha 
$6$ por $6$.
Nesse último,  a quantidade de botões que podem ser acionados  é limitada. Anteriormente, em meados de 1970, já havia sido lançado pela {{\it Parker Brothers}} um jogo com regras semelhantes contendo 9 botões, denominado {{\it Merlin}}. 

\begin{figure}[H]
    \centering
    \caption{Jogo \textit{\textbf{Lights Out}}}
    \vspace{-0.4cm}
    \includegraphics[width=0.35\linewidth]{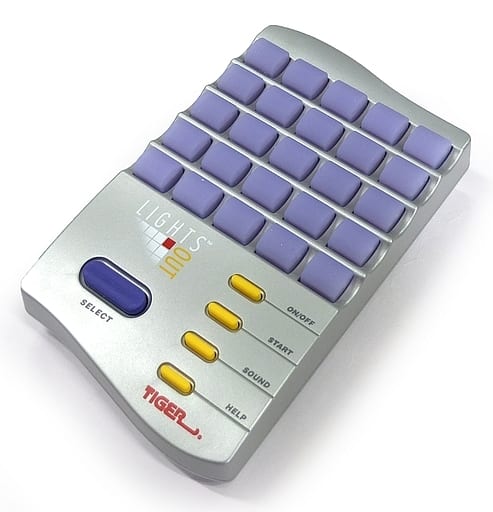}
    \vspace{-0.2cm}
    \centerline{\footnotesize Fonte: {\url{https://www.suruga-ya.com/en/product/148007928}.}}
    \label{fig.1}
\end{figure}

Além desses, existe um jogo que foi adaptado do {\it{Lights Out}} original, criado com o {{\it software}} GeoGebra, denominado: ``Acenda e apague a luz", que consiste em reproduzir padrões, onde todas ou determinadas luzes devem ficar acesas, dependendo do objetivo do jogo. O jogo está disponível no {{\it link}}:

\begin{center}
\url{http://clubes.obmep.org.br/blog/acenda-e-apague-a-luz/comment-page-1/}.
\end{center}\medskip

Entre os trabalhos relacionados com o jogo {\it{Lights Out}} e algumas de suas várias versões
destacamos os desenvolvidos por \hyperlink{sutner}{Klaus Sutner (1989)}; \hyperlink{barua}{Rana Barua e Subramanian Ramakrishnan (1996)}; \hyperlink{anderson}{Marlow Anderson e Todd Feil (1998)}; 
\hyperlink{goshima}{Masato Goshima e Masakazu Yamagishi (2009)}; \hyperlink{madsen}{Matthew A. Madsen (2010)}; \hyperlink{fleischer}{Rudolf Fleischer e Jiajin Yu (2013)};
\hyperlink{kreh}{Martin Kreh (2017)} e \hyperlink{berman}{Abraham Berman, Franziska Borer e Norbert Hungerbühler (2021)}.

\hyperlink{sutner}{Klaus Sutner (1989)} resolve o jogo para a malha
$n$ por $n$ 
usando modelos matemáticos autômatos celulares e teoria de grafos.

\hyperlink{barua}{Rana Barua e Subramanian Ramakrishnan (1996)} fornecem uma condição necessária e suficiente para que o jogo tenha solução, no caso da malha 
$m$ por $n$,
relacionando a solubilidade com o fato que certos polinômios de Fibonacci (envolvendo $n$ e $m$) sejam coprimos. Também fornecem um algoritmo para encontrar o número de soluções possíveis para uma dada configuração inicial.

\hyperlink{anderson}{Marlow Anderson e Todd Feil (1998)} estudam a solubilidade do jogo no contexto da álgebra linear principalmente para o caso da malha 
$5$ por $5$,
relacionando a configuração inicial do jogo com a base do espaço nulo de uma certa matriz. Além disso, eles fazem a observação de que esse estudo pode ser generalizado para o caso da malha $n\times n$ usando métodos análogos.

\hyperlink{goshima}{Masato Goshima e Masakazu Yamagishi (2009)} demonstram que dada uma configuração inicial do jogo {{\it Torus Lights Out}} na malha 
$2^k$ por $2^k$, 
para $k\geq 3$, este pode ser resolvido repetindo um determinado procedimento utilizando a teoria de álgebra linear e a teoria de grafos. Ainda exibem um critério de solubilidade para o jogo {{\it Torus Lights Out}} na malha 
$5^k$ por $5^k$, 
com $k\geq 2$.

\hyperlink{madsen}{Matthew A. Madsen (2010)} aborda vários métodos diferentes que podem ser usados para encontrar uma solução para o jogo no caso da malha
$5$ por $5$
e utiliza álgebra linear para encontrar uma estratégia vencedora usando o menor número de movimentos.

\hyperlink{fleischer}{Rudolf Fleischer e Jiajin Yu (2013)} fazem uma revisão dos trabalhos referente ao jogo em uma apresentação unificada.

\hyperlink{kreh}{Martin Kreh (2017)} estabelece critérios de solubilidade para o jogo com malha
$n$ por $n$ 
e além das luzes estarem somente acesas ou apagadas, são consideradas luzes coloridas que 
vão mudando de forma cíclica. Para a demonstração desses critérios foram usadas técnicas 
da álgebra linear. Ademais foram discutidas maneiras de estudar a solubilidade do jogo 
baseadas na teoria algébrica dos números.

\hyperlink{berman}{Abraham Berman, Franziska Borer e Norbert Hungerbühler (2021)} analizam novas versões do jogo e também consideram luzes coloridas. A solubilidade do jogo recai em um sistemas de equações lineares cuja solução envolve a decomposição de Smith.

Neste trabalho, analisamos o resultado para o caso da malha
$m$ por $n$,
considerando apenas luzes apagadas ou acesas. A solução depende se uma certa matriz tridiagonal em blocos é invertível ou não. Para o caso da malha
$2$ por $n$, 
determinamos ainda o valor do determinante. Para demonstrar o primeiro resultado, utilizamos ferramentas da álgebra linear e, para o segundo, fizemos uso de algumas identidades trigonométricas básicas.

\section{Preliminares}

\subsection{A álgebra linear do jogo \textit{\textbf{Lights Out}}}
\label{al_lo}

Nesta seção, explicamos brevemente como o jogo funciona no formato de uma malha
$m$ por $n$ 
com duas cores para a luz (acesa/apagada) e como a álgebra linear está envolvida na resolução do jogo {{\it Lights Out}}. 

Nesse caso, podemos representar os botões do jogo pelos elementos de uma matriz. Cada botão traz uma luz, que pode estar acesa ou apagada. Quando um botão é pressionado, seu estado  ligado/desligado é alterado, assim como, de todos os botões vizinhos verticais e horizontais. 



Então, dada uma configuração inicial de botões acesos, o objetivo do jogo é apagar todas as luzes.
Vamos admitir que pressionar um botão duas vezes é equivalente a não pressioná-lo. Dessa forma, dada uma configuração, vamos considerar apenas as soluções nas quais cada botão é pressionado uma única vez. Além disso, o estado (ligado/desligado) de um botão depende de quantas vezes ele e seus vizinhos foram acionados. A Figura \ref{fig.2} mostra um exemplo do jogo na malha 
$3$ por $3$.

\begin{figure}[H]
    \centering
    \caption{Jogo na malha $\mathbf{3\times 3}$}
    \vspace{-0.4cm}
    \includegraphics[width=0.9\linewidth]{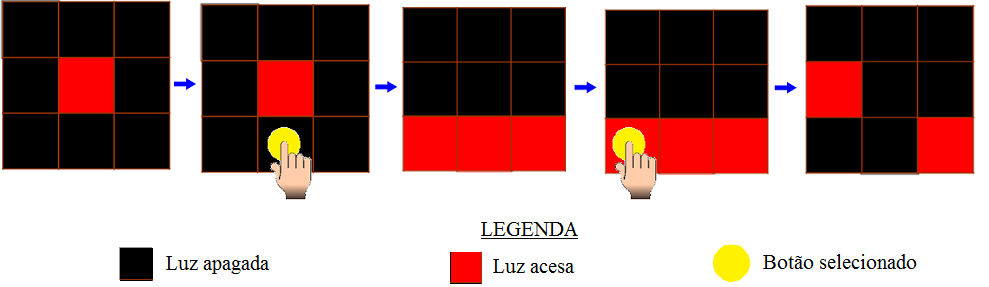}
    \centerline{\footnotesize Fonte: Os autores}
    \label{fig.2}
\end{figure}

\vspace{-0.3cm}

A seguir, disponibilizamos o link: 

\begin{center}
\url{https://www.geogebra.org/m/rbwagfqj}
\end{center}
que contém as versões do jogo {{\it Lights Out}} nas malhas: 
$2$ por $3$, $3$ por $3$ e $4$ por $4$,
elaboradas com o auxílio do {{\it software}} GeoGebra. Ao acessar o {{\it link}} e escolher o formato do jogo, é necessário clicar no botão sortear para iniciar o jogo. Após clicar no botão sortear, será fornecida uma configuração inicial
de luzes acesas (vermelhas) e apagadas (pretas). Depois é só escolher os botões, afim de que
todas as luzes sejam apagadas.

 Como há exatamente dois estados, ligado ou desligado, vamos representá-los por elementos de $\mathbb{Z}_2$ (anel dos números inteiros módulo $2$). Assim, quando a luz estiver na posição ligada será representada pelo número $1$, caso contrário será representada pelo número $0$. 
 
 Posto isto, vamos estabelecer uma estratégia para resolver o jogo com $q=mn$ botões disponibilizados no formato retangular, utilizando tópicos de álgebra linear. Suponha que o jogo tenha $mn$ botões numeradas por linhas, como no Quadro \ref{Tab.1} a seguir.

\begin{table}[h!]

\begin{center} 
\caption{Enumeração das teclas}
\vspace{-0.3cm}
\setlength{\arrayrulewidth}{0.35 mm}
\label{Tab.1}
\begin{tabular}{||c||c||c||c|| c||}\hline
1 & 2& $\cdots$ &$n-1$ & $n$ \\ \hline \hline
$n+1$ & $n+2$ & $\cdots$ & $2n-1$ & $2n$ \\  \hline \hline
$2n+1$ & $2n+2$ & $\cdots$ & $3n-1$ &  $3n$ \\  \hline \hline
$\vdots$ & $\vdots$ & $\vdots$ & $\vdots$ &$\vdots$ \\  \hline \hline
$q-n+1$ & $q-n+2$ & $\cdots$ & $q-1$ & $q$ \\  \hline 
\end{tabular} \vspace{-0.4cm}
\centerline{\footnotesize \bf Fonte: Os autores}
\end{center}
\end{table}

Chamamos de $C\in (\mathbb{Z}_2)^q$ o vetor que representa a configuração inicial (de luzes acesas ou apagadas). A $i$-\'esima coordenada deste vetor indica se a luz da posição $i$ est\'a acesa ou não.  Por exemplo, se inicialmente todas as luzes estão acesas, então $C=(1,1,1, \dots, 1,1),$ ou então, se apenas as luzes $2$ e $q-1$ est\~ao acesas teremos $C=(0,1,0,0, \dots, 0, 1, 0).$

Denotaremos por $A_j\in (\mathbb{Z}_2)^q$, com $j=1,2,\dots,q$, o vetor que aponta quais luzes são acesas quando o botão $j$ é acionado, considerando que todos os botões estão apagados inicialmente, e 
$x_i = 1$ ou $x_i = 0$, 
com $i=1,2,\dots,q$, que indica se o botão $i$ foi acionado ou não,
respectivamente.
Por exemplo, se o jogo tem o formato da malha
$m$ por $n$ 
e se o botão $1$ for acionado, então $A_1=(1,1, 0, \dots, 0, 1, 0, 0, 0, \dots, 0, 0),$ com $1$ nas posições $1, 2$ e $n+1$ como mostra o Quadro \ref{Tab.2}.

\begin{table}[h!]
\begin{center} 
\caption{Botão $\mathbf{1}$ acionado}
\vspace{-0.3cm}
\setlength{\arrayrulewidth}{0.35mm}
\label{Tab.2}
\begin{tabular}{||c||c||c||c|| c||}\hline
\cellcolor{yellow!} $\mathbf{1}$ &\cellcolor{yellow!} $\mathbf{2}$ & $\cdots$ & $n-1$ & $n$ \\ \hline \hline
\cellcolor{yellow!}$\mathbf{n+1}$ & $n+2$ & $\cdots$ & $2n-1$ & $2n$ \\  \hline \hline
$2n+1$ & $2n+2$ & $\cdots$ & $3n-1$ &  $3n$ \\  \hline \hline
$\vdots$ & $\vdots$ & $\vdots$ & $\vdots$ & $\vdots$ \\  \hline \hline
$q-n+1$ & $q-n+2$ & $\cdots$ & $q-1$ & $q$ \\  \hline 
\end{tabular} \vspace{-0.4cm}
\centerline{\footnotesize \bf Fonte: Os autores}
\end{center}
\end{table}

De modo geral, $A_j$ possui $1$ nas posições $j-n, j-1,j, j+1$ e $j+n,$ e $0$ nas demais posições, observando que quando os botões acionados estão nas laterais (ou linha $1,$ ou linha $m,$ ou coluna $1$ ou coluna $n$), somente 3 ou 4 entradas terão o valor $1$. Portanto, teremos sempre no mínimo $3$ e no máximo $5$ botões que são alterados.

Assim, $x_jA_j$ com $j=1,2,\dots,q$, indica que o botão $j$ foi selecionado, quando $x_j=1$ e, portanto, temos a configuração de quais botões foram acesos e apagados nesse processo, ou apenas o vetor nulo se $x_j=0$, que indica que o botão não foi selecionado.
Dessa forma, precisamos determinar os valores de $x_j$, com $j=1,2,\dots,q$, de modo que ao adicionarmos a configuração inicial com cada $x_jA_j$, onde $j=1,2,\dots, q$,
 o resultado seja o vetor nulo. Em outras palavras, devemos resolver a seguinte equação:
$$C+x_1A_1+x_2A_2+\dots+ x_qA_q= \Vec{0}.$$ Como $C\equiv -C \ (mod~2)$, podemos escrever $$x_1A_1+x_2A_2+\dots+ x_qA_q= C.$$ 

Observe que a ordem em que os botões são acionados não faz diferença, uma vez que a adição em $\mathbb{Z}_2$ é comutativa como mostra a Figura \ref{fig.3}:

\begin{figure}[H]
    \centering
    \caption{Comutatividade do jogo na malha $\mathbf{3}$ por $\mathbf{3}$}
    \vspace{-0.4cm}
    \includegraphics[width=1\linewidth]{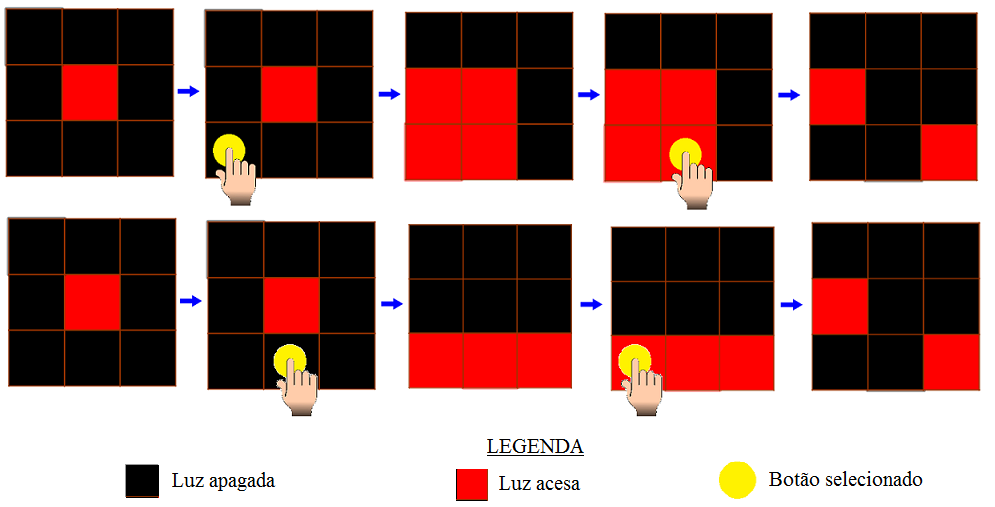}
    \centerline{\footnotesize \bf Fonte: Os autores}
    \label{fig.3}
\end{figure}

\vspace{-0.4cm}

Note que, se chamamos de $A(m,n)$ a matriz formada pelas colunas $A_1, A_2,\dots, A_q$ e $X$ a matriz coluna formada por $x_1, x_2, \dots, x_q$, podemos representar a equação acima da seguinte forma: $A(m,n) \cdot X=C$.

A matriz $A(m,n)$ será sempre uma matriz tridiagonal em blocos de ordem $m n$, onde os blocos são de ordem $n$ e podem ser de 3 formas: banda $1$ ou tridiagonal (matriz quadrada onde apenas os elementos da diagonal principal e das diagoniais que estão logo acima e abaixo dela são não nulos) $T_n$ com $1$ nas entradas das diagonais, identidade $I_n$ e nula $0_n$. 
Ou seja,

\[ A(m,n) = \left[ \begin{array}{ccccccccc}
T_n & I_n & 0_n & 0_n & \cdots & 0_n & 0_n & 0_n  & 0_n\\
I_n & T_n & I_n & 0_n & \cdots & 0_n & 0_n & 0_n  & 0_n \\
0_n & I_n & T_n & I_n & \cdots & 0_n & 0_n & 0_n & 0_n\\
0_n & 0_n & I_n & T_n & \cdots & 0_n & 0_n & 0_n & 0_n\\
\vdots & \vdots & \vdots & \vdots & \ddots & \vdots & \vdots & \vdots & \vdots \\
0_n & 0_n & 0_n & 0_n & \cdots & T_n & I_n & 0_n & 0_n \\
0_n & 0_n & 0_n & 0_n & \cdots & I_n & T_n & I_n & 0_n \\
0_n & 0_n & 0_n & 0_n & \cdots & 0_n & I_n & T_n & I_n \\
0_n & 0_n & 0_n &  0_n & \cdots & 0_n & 0_n &I_n & T_n 
\end{array} \right]. \]

Por exemplo, se $m = 5$ e $n = 2$, a matriz será de ordem $5\cdot 2=10$ da seguinte forma:

\[ A(5,2) = \left[ \begin{array}{ccccc}
T_2 & I_2 & 0_2 & 0_2 & 0_2 \\
I_2 & T_2 & I_2 & 0_2 & 0_2 \\
0_2 & I_2 & T_2 & I_2 & 0_2 \\
0_2 & 0_2 & I_2 & T_2 & I_2 \\
0_2 & 0_2 & 0_2 & I_2 & T_2 
\end{array} \right] = \left[
\begin{tabular}{c c | c c  |c c | c c  |c c  }
1 & 1 & 1 & 0 & 0 & 0 & 0 & 0 & 0 & 0 \\
1 & 1 & 0 & 1 & 0 & 0 & 0 & 0 & 0 & 0\\ \hline
1 & 0 & 1 & 1 & 1 & 0 & 0 & 0 & 0 & 0\\
0 & 1 & 1 & 1 & 0 & 1 & 0 & 0 & 0 & 0 \\ \hline
0 & 0 & 1 & 0 & 1 & 1 & 1 & 0 & 0 & 0 \\
0 & 0 & 0 & 1 & 1 & 1 & 0 & 1 & 0 & 0 \\\hline
0 & 0 & 0 & 0 & 1 & 0 & 1 & 1 & 1 & 0 \\
0 & 0 & 0 & 0 & 0 & 1 & 1 & 1 & 0 & 1\\ \hline 
0 & 0 & 0 & 0 & 0 & 0 & 1 & 0 & 1 & 1\\
0 & 0 & 0 & 0 & 0 & 0 & 0 & 1 & 1 & 1 
\end{tabular}
\right] \]

e se $m = 2$ e $n = 5$, a matriz será de ordem $2\cdot 5=10$ da seguinte forma:
\[ A(2,5) = \left[ \begin{array}{cc}
T_5 & I_5 \\
I_5 & T_5 
\end{array} \right]=  \left[
\begin{tabular}{c c c c c | c c c c c  }
1 & 1 & 0 & 0 & 0 & 1 & 0 & 0 & 0 & 0 \\
1 & 1 & 1 & 0 & 0 & 0 & 1 & 0 & 0 & 0\\ 
0 & 1 & 1 & 1 & 0 & 0 & 0 & 1 & 0 & 0\\
0 & 0 & 1 & 1 & 1 & 0 & 0 & 0 & 1 & 0 \\ 
0 & 0 & 0 & 1 & 1 & 0 & 0 & 0 & 0 & 1 \\ \hline
1 & 0 & 0 & 0 & 0 & 1 & 1 & 0 & 0 & 0 \\
0 & 1 & 0 & 0 & 0 & 1 & 1 & 1 & 0 & 0 \\
0 & 0 & 1 & 0 & 0 & 0 & 1 & 1 & 1 & 0\\ 
0 & 0 & 0 & 1 & 0 & 0 & 0 & 1 & 1 & 1\\
0 & 0 & 0 & 0 & 1 & 0 & 0 & 0 & 1 & 1 
\end{tabular}
\right].\]

Ou seja, a matriz das configurações do jogo de uma malha de $m$ linhas e $n$ colunas, será a matriz quadrada $A(m,n)$ de ordem $m\cdot n$ que terá $m^2$ blocos, cada um formado por matrizes de ordem $n$.







 A partir da conexão estabelecida entre o jogo {{\it Lights Out}} e a teoria da álgebra linear, podemos questionar: sob quais condições será possível apagar todas as luzes? 

\hyperlink{anderson}{Marlow Anderson e Todd Feil (1998)} mostram que o jogo tem solução se o vetor que representa a configuração inicial é ortogonal ao espaço nulo da matriz $A(n,n)$. Tal resultado se mantém para o caso de $A(m,n)$, uma vez que o desenvolvimento é exatamente o mesmo. Vale ressaltar que \hyperlink{anderson}{Marlow Anderson e Todd Feil (1998, p. 303)} apresentam uma tabela com a dimensão do espaço nulo da matriz $A(n,n)$ com um erro no caso $n = 16$, o valor correto para a dimensão do espaço nulo é zero, uma vez que o determinante não é nulo conforme calculado por \hyperlink{kreh}{Martin Kreh (2017, p. 942)}.

{ Por exemplo, para o jogo com o formato de uma malha 
$2$ por $5$,
temos que o determinante da matriz $A(2,5)$ é igual a zero, ou seja, ou o sistema $A(2,5) \cdot X = C$ não possui solução ou possui infinitas soluções. Considerando a configuração inicial $C_0 =(0,1,0,1,0,0,1,0,1,0)$, em que as segunda e quarta colunas de botões estão acesas, obtemos que o jogo possui duas soluções: apertar uma vez os botões $1$, $5$, $6$ e $10$ ou apertar uma vez os botões $3$ e $8$ (onde os botões estão numerados conforme Quadro \ref{Tab.1}). Podemos mostrar que o sistema $A(2,5) \cdot X = C_0$ admite infinitas soluções, que são aquelas representadas por apertar uma quantidade ímpar de vezes os botões em cada uma das soluções apresentadas e uma quantidade par de vezes os demais botões. Por outro lado, se a configuração inicial for $C'=(1,0,0,0,0,0,0,0,0,0)$ o sistema $A(2,5) \cdot X = C'$ não admite solução.}

 Nosso objetivo aqui é estabelecer quais as condições para que o jogo apresente solução independentemente da configuração inicial. Sabemos que $A(m,n) \cdot X=C$ tem solução quando a matriz $A(m,n)$ é invertível, portanto, precisamos verificar quando isto ocorre.


\subsection{O determinante de $\mathbf{A(m,n)}$}

Primeiramente, nesta seção, definimos o produto e a soma de Kronecker para elucidar a fórmula do determinante da matriz $A(m,n)$, a partir de seus autovalores. Começamos pela definição de produto de Kronecker que pode ser  encontrada no livro de \hyperlink{horn}{Roger A. Horn e Charles R. Johnson (1991, p. 243)}.

\begin{defi} Sejam $A = [a_{ij}]$ uma matriz $m \times n$ e $B = [b_{ik}]$ uma matriz $p \times q$  definimos o produto de Kronecker de $A$ e $B$ (nessa ordem) como a matriz $mp \times nq$ em blocos dada por
\[
A\otimes B=  \left[ \begin{array}{cccc}
a_{11} B & a_{12} B & \dots &a_{1n} B \\
a_{21} B & a_{22} B & \dots &a_{2n} B \\
\vdots & \vdots & \ddots & \vdots \\
a_{m1} B & a_{m2} B & \dots &a_{mn} B
\end{array} \right].
\]
\end{defi}




A partir da definição anterior é fácil ver que
\[
I_m \otimes T_n =  \left[ \begin{array}{cccc}
T_n & 0 & \dots & 0 \\
0 & T_n & \dots & 0 \\
\vdots & \vdots & \ddots & \vdots \\
0 & 0 & \dots & T_n
\end{array} \right],
\]
de ordem $m n$; 
\[
T_m \otimes I_n =  \left[ \begin{array}{ccccc}
I_n & I_n & 0 & \dots & 0 \\
I_n & I_n & I_n & \dots & 0 \\
\vdots & \ddots & \ddots & \ddots & \vdots \\
0 & \dots & I_n & I_n & I_n \\
0 & 0 & \dots & I_n & I_n
\end{array} \right],
\]
também de ordem $mn$, e que $I_m \otimes I_n = I_{mn}$.

Assim, temos que
\[ A(m,n) = I_m \otimes T_n + T_m \otimes I_n - I_{mn}. \]

Já a soma de Kronecker \hyperlink{horn}{(Horn e Johnson, 1991, p. 268)} está definida para matrizes quadradas da seguinte forma: se $A$ tem ordem $n$ e $B$ tem ordem $m$, então 
\[ A \oplus B = I_m \otimes A + B \otimes I_n \]
é uma matriz de ordem $mn$. Com isso, temos que
\[ A(m,n) = T_n \oplus T_m - I_{mn}. \]

Agora, dada uma matriz $M$, seus autovalores satisfazem a equação característica $det(M - xI) = 0$ \hyperlink{leon}{(Leon, 2014, p. 266)}.  Mas como, 
\[ 0 = det(M - xI) = det(M - ((x-1)I + I) = det((M - I) - (x-1)I) ,\]
temos que os autovalores de $M - I$ são os autovalores de $M$ menos 1.

Ou seja, para encontrar os autovalores de $A(m,n)$ precisamos determinar os autovalores de $T_n \oplus T_m$. Mas, segundo \hyperlink{horn}{Roger A. Horn e Charles R. Johnson (1991, p. 269)}, os autovalores da soma de Kronecker são a soma dos autovalores das matrizes envolvidas.



Desta maneira, precisamos dos autovalores de $T_n$, que de acordo com \hyperlink{noschese}{Silvia Noschese, Lionello Pasquini e Lothar Reichel (2013, p. 304)}, são da forma 
\[ \lambda_k = 1 + 2 \cos \left( \frac{k\pi}{n+1} \right) \quad \text{ para } k = 1, 2, \dots, n.\]




Portanto, temos que os autovalores de $A(m,n)$ são da forma
\begin{equation} \label{autovalores}
    \lambda_{jk} = 1 + 2 \left[ \cos\left(\frac{j\pi}{m+1}\right) + \cos\left(\frac{k\pi}{n+1}\right) \right] 
\end{equation} 
com $j = 1, 2, \dots, m$ e $k = 1, 2, \dots, n$.

\ 

\noindent Lembrando que o determinante de uma matriz é o produto de seus autovalores \hyperlink{leon}{(Leon, 2014, p. 271)} temos que

\[ det(A(m,n)) = \prod_{j=1}^m \prod_{k=1}^n \lambda_{jk}.\]

\ 

\noindent Logo,
\[ det(A(m,n)) = \prod_{j=1}^m \prod_{k=1}^n \left[ 1 + 2 \left( \cos\left(\frac{j\pi}{m+1}\right) + \cos\left(\frac{k\pi}{n+1}\right) \right) \right].\]

Usando a fórmula obtida acima podemos observar que o determinante das matrizes $A(m,n)$ e $A(n,m)$ são iguais, ou seja, rotacionar o jogo não interfere em sua solubilidade, como era de se esperar.


\section{Alguns resultados sobre o jogo \textit{\textbf{Lights Out}}}

Nesta seção, abordamos os principais resultados deste trabalho e o teorema que empregamos para demonstrar um deles.

Seja $r\in \mathbb{Q}.$ Denota-se por $N(r)$ o menor inteiro positivo tal que $rN(r)$ seja um inteiro, ou seja, $N(r)=q$ se $r=\dfrac{p}{q}$ com $p$ e $q> 0$ inteiros primos entre si. Assim enunciamos o seguinte teorema, que pode ser encontrado no artigo de \hyperlink{berger}{Arno Berger (2018)}.

\begin{teo} \label{Teo1}
Sejam $r_1, r_2\in \mathbb{Q}$ tais que $r_1-r_2\not\in \mathbb{Z}$ e $r_1+r_2\not\in \mathbb{Z}.$ Então são equivalentes:
\begin{enumerate}
    \item[i)] Os números $1, \ \cos(r_1 \pi), \ \cos(r_2\pi)$ são linearmente independentes sobre $\mathbb{Q};$
    \item[ii)] $N(r_j)\geq 4$ para $j\in\{1,2\}$ e $(N(r_1),N(r_2))\neq (5,5).$
\end{enumerate}
\end{teo}

A seguir apresentamos o resultado central deste trabalho, o qual fornece um critério para a solubilidade do jogo {{\it Lights Out}}, independente da configuração inicial.

\begin{teo} \label{Teo2}
A matriz $A(m,n)$ é singular se, e somente se, vale uma das seguintes afirmações:
\begin{enumerate}
\item $m \equiv 2 \  (mod~3)$ e $n$ é ímpar.
\item $m$ é ímpar e $n \equiv 2 \  (mod~3)$.
\item $m \equiv 4 \  (mod~5)$ e $n \equiv 4 \ (mod~5)$.
\end{enumerate}
\end{teo}

\ 

\begin{proof}

 Suponha que $A(m,n)$ seja singular. Sabemos que $A(m,n)$ é singular se, e somente se, um de seus autovalores é zero. Usando \eqref{autovalores}, temos que
\[ \lambda_{jk} = 1 + 2 \left[ \cos\left(\dfrac{j\pi}{m+1}\right) + \cos\left(\dfrac{k\pi}{n+1}\right) \right] = 0,\]
para algum $j=1,2, \dots, m$ e algum $ k=1, 2, \ldots, n.$ Isto é, devemos ter que

\begin{equation} \label{condição}
\left[ \cos\left(\frac{j\pi}{m+1}\right) + \cos\left(\frac{k\pi}{n+1}\right) \right]=-\dfrac{1}{2},
\end{equation}
para algum $j=1,2, \dots, m$ e algum $ k=1, 2, \dots, n.$

Sejam $\alpha=\dfrac{j}{m+1}$ e $\beta=\dfrac{k}{n+1}$. Vamos analisar dois casos: $\alpha+\beta, \alpha-\beta \in \mathbb{Z}$ e $\alpha+\beta, \alpha-\beta \notin \mathbb{Z}$.

\begin{enumerate}
    \item[Caso 1.] Se $\alpha+\beta, \alpha-\beta \in \mathbb{Z}$, então usando a identidade 
\[
\cos (\theta+\phi)+\cos (\theta-\phi)=2\cos (\theta)\cos(\phi)
\]
para $\theta=(\alpha+\beta)\frac{\pi}{2}$ e $\phi=(\alpha-\beta)\frac{\pi}{2}$ temos que 
\[
 \cos(\alpha\pi)+ \cos(\beta \pi)=2 \cos\left(\left(\alpha+\beta\right)\dfrac{\pi}{2}\right)\cos\left(\left(\alpha-\beta\right)\dfrac{\pi}{2}\right)
\]
 que por sua vez
é igual a $0$, $2$ ou $-2$, que são diferentes de $-\dfrac{1}{2}.$
\item[Caso 2.] Supondo que $\alpha+\beta, \alpha-\beta \notin \mathbb{Z}$ e considerando que uma condição necessária para que a igualdade \eqref{condição} seja verdadeira é que $1, \cos(\alpha\pi)$ e $ \cos(\beta \pi)$ sejam linearmente dependentes sobre $\mathbb{Q}$. 
Pelo Teorema \ref{Teo1} podemos afirmar que 
$N(\alpha)<4$ e $N(\beta)<4$ ou $N(\alpha)=N(\beta)=5.$ Consequentemente $\alpha$  e $\beta$  pertencem ao conjunto

 \[	\left\{ 0, \frac{1}{5}, \frac{1}{3}, \frac{2}{5}, \frac{1}{2}, \frac{3}{5}, \frac{2}{3}, \frac{4}{5}, 1 \right\}.\]

Em particular, queremos
\[ \cos(\alpha \pi) + \cos(\beta \pi)=-\dfrac{1}{2}.\] 

Como $0< \alpha, \beta < 1,$ podemos desconsiderar $\alpha$ e $\beta$ iguais a $0$ e $1.$
Assim, vamos analisar o conjunto
\[ X=	\left\{ \frac{1}{5}, \frac{1}{3}, \frac{2}{5}, \frac{1}{2}, \frac{3}{5}, \frac{2}{3}, \frac{4}{5} \right\},\]
de modo que $\alpha$ e $ \beta$ satisfaçam $\alpha\pm \beta \not\in\mathbb{Z}.$
Considerando todas as possíveis combinações de $\alpha$ e $\beta$ no conjunto $X$ vemos que as únicas possibilidades que procuramos são as seguintes:

\begin{table}[ht]
\renewcommand*{\arraystretch}{1.8}
	\centering
	\begin{tabular}{|l|l|c|c|c|c|}
	 \hline 
		$\alpha$        & $\beta$        & $\cos(\alpha \pi)$ & $\cos(\beta \pi)$ & $\cos(\alpha \pi)+\cos(\beta \pi)$   \\
		\hline 
		$\frac{1}{2}$ & $\frac{2}{3}$ & $0$ &  $-\frac{1}{2}$ & $-\frac{1}{2}$ \\
		\hline 
		$\frac{2}{3}$ & $\frac{1}{2}$ & $-\frac{1}{2}$ &  $0$ & $-\frac{1}{2}$ \\
		\hline 
		$\frac{2}{5}$ & $\frac{4}{5}$ & $\frac{-1+\sqrt{5}}{4}$ &  $\frac{-1-\sqrt{5}}{4}$ & $-\frac{1}{2}$ \\
		\hline 
		$\frac{4}{5}$ & $\frac{2}{5}$ & $\frac{-1-\sqrt{5}}{4}$ & $\frac{-1+\sqrt{5}}{4}$ & $-\frac{1}{2}$ \\
		\hline
	\end{tabular}
\end{table}

 A primeira linha da tabela nos dá a condição
\[ m \,\,\,\,\, \textrm{ímpar} \,\,\,\,\, \textrm{e} \,\,\,\,\, n \equiv 2 \ (mod~3),\]
 uma vez que para que 
$ \alpha = \frac{j}{m+1} = \frac{1}{2} $
precisamos $m = 2j - 1$ e para 
$ \beta = \frac{k}{n+1} = \frac{2}{3} $
precisamos de $n =  \frac{3k}{2} - 1$ e $k$ par (ou seja, $n = 3 \left(\frac{k}{2}-1\right) + 2$). Analogamente,
da segunda linha obtemos a condição
\[
n \,\,\,\,\, \textrm{ímpar}  \,\,\,\,\, \textrm{e} \,\,\,\,\, m \equiv 2 \ (mod~3)\]
e
com o mesmo raciocínio,
nas terceira e quarta linhas, segue que
\[ m \equiv 4 \ (mod~5) \,\,\,\,\,  \textrm{e} \,\,\,\,\, n \equiv 4\ (mod~5).\] 
\end{enumerate}

Na outra direção, supondo que $m \equiv 2 \  (mod~3)$ e $n$ é ímpar temos que 
$m=2+3q$ e $n=1+2p$ para alguns $q,p\in \mathbb{N}$. Portanto, para $1 \leq j = 2(q+1) \leq m$ e para $1 \leq k = p+1 \leq n$,

\begin{eqnarray*}
\cos\left(\dfrac{2(q+1)\pi}{2 + 3q + 1}\right) + \cos\left(\dfrac{(p+1)\pi}{1+2p+1}\right) 
& = & \cos\left(\dfrac{2(q+1)\pi}{3(q + 1)}\right) + \cos\left(\dfrac{(p+1)\pi}{2(p+1)}\right) \\
& = & \cos \left(\frac{2\pi}{3}\right) + \cos\left(\frac{\pi}{2}\right)=-\dfrac{1}{2}.
\end{eqnarray*}

O caso $m$ é ímpar e $n \equiv 2 \  (mod~3)$ é análogo ao anterior.

Agora, suponhamos que $m \equiv 4 \ (mod~5)$ e $n \equiv 4 \ (mod~5)$. Logo, $m = 4+5q$ e $n = 4+5p$ para alguns $q, p\in \mathbb{N}$. Portanto, para $1 \leq j = 2(q+1) \leq m$ e para $1 \leq k = 4(p+1) \leq n$,

\begin{eqnarray*}
\cos\left(\dfrac{2(q+1)\pi}{4 + 5q + 1}\right) + \cos\left(\dfrac{4(p+1)\pi}{4+5p+1}\right) 
& = & \cos \left(\frac{2\pi}{5}\right) + \cos\left(\frac{4\pi}{5}\right)=-\dfrac{1}{2}.
\end{eqnarray*}
concluindo o resultado.

\end{proof}

Podemos observar que, se nenhuma das três afirmações do Teorema \ref{Teo2} é satisfeita segue que o jogo tem solução independente da configuração inicial. Mas, se a matriz $A(m,n)$ é singular não podemos afirmar que o jogo não tem solução (depende da configuração inicial) como já apresentado anteriormente na Subseção \ref{al_lo}. 


Por fim, mostraremos uma fórmula mais simples para o determinante da matriz $A(2,n).$ O problema de encontrar determinantes de certas matrizes é um problema clássico e aqui, encontramos o determinante de uma matriz tridiagonal em blocos, que por sua vez, é também um caso particular de uma matriz pentadiagonal. Para tanto, precisaremos de uma identidade trigonométrica que será demonstrada na próxima proposição.

\begin{propo} \label{prop1}
Se $n\in \mathbb{N}$, então
    \[
    \displaystyle \prod_{k=1}^{n-1} \cos\left(\dfrac{\pi k}{n}\right)=\dfrac{\mathrm{sen}\left(\dfrac{\pi n}{2}\right)}{2^{n-1}}.
    \]
\end{propo}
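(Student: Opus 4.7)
My strategy is to convert each cosine via the double-angle identity $\cos\theta=\sin(2\theta)/(2\sin\theta)$ into a ratio of sines, and then to show by a careful reindexing that the two resulting sine products coincide up to an explicit sign. I would first dispose of the case when $n$ is even: in that case $n/2\in\{1,\dots,n-1\}$, so the factor $\cos((n/2)\pi/n)=\cos(\pi/2)=0$ appears on the left, while $\sin(n\pi/2)=\sin((n/2)\pi)=0$ on the right, so both sides vanish and the identity is trivial.

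Assume henceforth that $n$ is odd. Since $\sin(k\pi/n)\neq 0$ for $1\leq k\leq n-1$, the double-angle formula yields
\[
\prod_{k=1}^{n-1}\cos\!\left(\frac{k\pi}{n}\right)=\frac{1}{2^{\,n-1}}\cdot\frac{\displaystyle\prod_{k=1}^{n-1}\sin(2k\pi/n)}{\displaystyle\prod_{k=1}^{n-1}\sin(k\pi/n)}.
\]
The crux of the argument is to show that this ratio of sine products equals $(-1)^{(n-1)/2}$. I would split the index set $\{1,\dots,n-1\}$ into two halves. For $1\leq k\leq (n-1)/2$, the value $2k$ lies in $\{2,4,\dots,n-1\}$ and $\sin(2k\pi/n)$ is left untouched. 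For $(n+1)/2\leq k\leq n-1$, I would write $2k=n+l$ with $l=2k-n\in\{1,3,\dots,n-2\}$ and use $\sin((n+l)\pi/n)=-\sin(l\pi/n)$; each of these $(n-1)/2$ factors contributes a minus sign, yielding an overall factor of $(-1)^{(n-1)/2}$. Once the signs are factored out, the arguments appearing in the numerator form the set $\{2,4,\dots,n-1\}\cup\{1,3,\dots,n-2\}=\{1,2,\dots,n-1\}$, which is exactly the set of arguments in the denominator, so numerator and denominator cancel.

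Combining these steps gives $\prod_{k=1}^{n-1}\cos(k\pi/n)=(-1)^{(n-1)/2}/2^{\,n-1}$. Writing $n=2m+1$, one has $\sin(n\pi/2)=\sin(m\pi+\pi/2)=(-1)^m=(-1)^{(n-1)/2}$, so the right-hand side of the proposition is indeed $(-1)^{(n-1)/2}/2^{\,n-1}$, closing the identity. The only delicate point is the sign bookkeeping in the reindexing, but after isolating the $(n-1)/2$ terms of the form $\sin((n+\text{odd})\pi/n)$ it is purely mechanical; everything else reduces to a one-line identity.
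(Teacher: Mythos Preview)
Your argument is correct. The even case is handled exactly as in the paper (both sides vanish), and for odd $n$ your telescoping via $\cos\theta=\sin(2\theta)/(2\sin\theta)$ together with the reindexing $2k=n+l$ on the upper half of the index set is sound: the $(n-1)/2$ sign flips from $\sin((n+l)\pi/n)=-\sin(l\pi/n)$ are accounted for, the residual indices $\{2,4,\dots,n-1\}\cup\{1,3,\dots,n-2\}$ do reconstitute $\{1,\dots,n-1\}$, and the identification $\sin((2m+1)\pi/2)=(-1)^m$ is right.

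The paper takes a different route: it factors the polynomial $z^{2n}-1$ over its $2n$-th roots of unity, pairs conjugate roots to obtain
\[
z^{2n}-1=(z-1)(z+1)\prod_{k=1}^{n-1}\bigl(z^{2}-2\cos(k\pi/n)\,z+1\bigr),
\]
and then evaluates at $z=i$, where each quadratic factor collapses to $-2i\cos(k\pi/n)$; the product of cosines is then read off from the resulting equation $(-1)^{n}-1=(-2)(-2i)^{n-1}\prod_{k}\cos(k\pi/n)$. That method is arguably more conceptual and avoids any case split inside the odd case, at the price of invoking complex numbers. Your approach stays entirely within real trigonometry and is more self-contained, but requires the bookkeeping you flagged. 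Either proof is perfectly acceptable.
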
 
    \begin{proof}
        As raízes da equação $z^{2n} -1=0$ são dadas por $e^{\frac{i2\pi k}{2n}},$ onde \linebreak $k=0,1,2,\dots,2n-1$. Escrevendo $w=e^{\frac{i\pi}{n}}$, temos que

        {\footnotesize}
        \begin{align*}
           z^{2n} -1&= \displaystyle \prod_{k=0}^{2n-1} (z-w^k)\\
           &=(z-1)(z-w)\cdots(z-w^{n-1})(z-w^n)(z-w^{n+1})\cdots (z-w^{2n-1}).
           \end{align*}
           
         Como $w^{2n-k}=w^{-k}$, temos que

         \begin{align*}
          z^{2n} -1&=(z-1)(z+1)\displaystyle \prod_{k=1}^{n-1}(z-w^k)(z-w^{-k})\\
           &=(z-1)(z+1)\displaystyle \prod_{k=1}^{n-1} \left(z^2 -2\cos\left(\dfrac{\pi k}{n}\right) z+1\right).
        \end{align*}
        {\footnotesize}

        Para $z=i$, segue que

        \begin{align*}
            (-1)^n -1=i^{2n}-1&=(-2)\displaystyle \prod_{k=1}^{n-1} (-2) \cos\left(\dfrac{\pi k}{n}\right) i
            =(-2)(-2i)^{n-1}\displaystyle \prod_{k=1}^{n-1} \cos\left(\dfrac{\pi k}{n}\right).
        \end{align*}
        De onde, $\displaystyle \prod_{k=1}^{n-1} \cos\left(\dfrac{\pi k}{n}\right)=\dfrac{(-1)^n -1}{(-2)(-2i)^{n-1}}$ é igual a $0$ se $n$ é par. Se $n$ é ímpar ($n=2r+1$, para algum $r\in \mathbb{N}$), obtemos

        \[
        \displaystyle \prod_{k=1}^{n-1} \cos\left(\dfrac{\pi k}{n}\right)=\left(\dfrac{1}{-2i}\right)^{n-1}=\left(\dfrac{i}{2}\right)^{n-1}=\dfrac{i^{2r}}{2^{2r}}=\dfrac{(-1)^r}{2^{2r}}=\dfrac{\mathrm{sen}\left(\dfrac{\pi n}{2}\right)}{2^{n-1}}.
        \]
    \end{proof}

\begin{teo}
Para $n\in \mathbb{N}$, temos

\[
\det(A(2,n))=
\begin{cases}
(-1)^{\frac{n}{2}} (n+1) &\,\,\, \textrm{se} \,\,\, n \,\,\,  \textrm{é par};\\
0 & \,\,\, \textrm{se} \,\,\, n \,\,\,  \textrm{é ímpar}.\
\end{cases}
\]
\end{teo}

\begin{proof}
Sabemos que

\begin{small}
\begin{align*}
\det(A(2,n))&=\displaystyle \prod_{k=1}^n\left[1+2\left(\cos\left(\frac{\pi}{3}\right)+\cos\left(\frac{k\pi}{n+1}\right)\right)\right]\left[1+2\left(\cos\left(\frac{2\pi}{3}\right)+\cos\left(\frac{k\pi}{n+1}\right)\right)\right]\\
&=\displaystyle \prod_{k=1}^n 4\cos\left(\frac{k\pi}{n+1}\right)\left[1+\cos\left(\frac{k\pi}{n+1}\right)\right].
\end{align*}
\end{small}
Vamos dividir em dois casos:

\begin{enumerate}
\item[Caso I.] Se $n$ é ímpar, então $n=2r+1$ para algum $r\in \mathbb{N}$. Como $1\leq k \leq 2r+1$, temos que algum $k$ é igual a $r+1.$ Assim,

\[
\cos\left(\frac{k\pi}{n+1}\right)=\cos\left(\frac{(r+1)\pi}{2(r+1)}\right)=\cos\left(\frac{\pi}{2}\right)=0.
\]
Portanto, $\det(A(2,n))=0$, se $n$ é ímpar.

\item[Caso II.] Se $n$ é par, então $n=2t$ para algum $t\in \mathbb{N}$. Então, usando a Proposição \ref{prop1} e algumas identidades trigonométricas obtemos

\begin{align*}
\det(A(2,2t))&=\displaystyle \prod_{k=1}^{2t} 4\cos\left(\frac{k\pi}{2t+1}\right)\left[1+\cos\left(\frac{k\pi}{2t+1}\right)\right]\\
&=4^{2t}\left[ \displaystyle \prod_{k=1}^{2t}\cos\left(\frac{k\pi}{2t+1}\right)\right]\left[\displaystyle \prod_{k=1}^{2t} \left(1+\cos\left(\frac{k\pi}{2t+1}\right)\right)\right]\\
&=4^{2t} \dfrac{\mathrm{sen}\left(\frac{(2t+1)\pi}{2}\right)}{2^{2t}}\left[\displaystyle \prod_{k=1}^{2t} \left(1+\cos\left(\frac{k\pi}{2t+1}\right)\right)\right]\\
&=2^{2t}(-1)^t \prod_{k=1}^{2t} \left(1+\cos\left(\frac{k\pi}{2t+1}\right)\right)\\
&=2^{2t}(-1)^t \prod_{k=1}^{2t} 2\cos^2\left(\dfrac{k\pi}{2(2t+1)}\right)\\
&=2^{2t} 2^{2t} (-1)^t \prod_{k=1}^{2t} \cos\left(\dfrac{k\pi}{2(2t+1)}\right) \mathrm{sen}\left(\dfrac{\pi}{2}-\dfrac{k\pi}{2(2t+1)}\right)\\
&\overset{s=2t+1-k}{=}2^{2t} 2^{2t} (-1)^t \prod_{s=1}^{2t} \cos\left(\dfrac{s\pi}{2(2t+1)}\right) \mathrm{sen}\left(\dfrac{s\pi}{2(2t+1)}\right)\\
&=2^{2t} 2^{2t} (-1)^t \prod_{s=1}^{2t} \dfrac{1}{2} \mathrm{sen}\left(\dfrac{s\pi}{2t+1}\right)\\
&=2^{2t} (-1)^t \prod_{s=1}^{2t}  \mathrm{sen}\left(\dfrac{s\pi}{2t+1}\right)=2^{2t} (-1)^t \dfrac{2t+1}{2^{2t}}=(-1)^t (2t+1).
\end{align*}
\end{enumerate}

\end{proof}

\section{Considerações finais}

O presente trabalho fornece um critério de solubilidade para o jogo 
\textit{Lights Out} no caso geral de uma malha 
$m$ por $n$, 
que depende da inversibilidade 
de uma certa matriz. Para tanto, fizemos uso de algumas ferramentas da álgebra linear. 
Adicionalmente, 
para um caso particular, calculamos o valor do determinante dessa matriz por meio de identidades 
trigonométricas.

A solubilidade do jogo para o caso de uma malha
$n$ por $n$
também foi abordada, 
por outros autores, usando a teoria da álgebra linear. Já para o caso geral, isto é, o jogo com uma 
malha
$m$ por $n$,
a solução foi desenvolvida usando outras teorias.

Acreditamos que o uso da Álgebra 
Linear propicia uma 
forma mais compreensível da solução do jogo. Por fim, o resultado deste trabalho mostra 
mais uma vez as implicações práticas de alguns conceitos da matemática.

Um próximo passo nesta direção seria investigar o valor do determinante das matrizes 
associadas ao jogo de modo mais geral e unificar a condição apresentada aqui com as condições dos outros autores.



\end{document}